\documentclass[reqno,11pt]{amsart}
\makeatletter
\def\section{\@startsection{section}{1}%
	\z@{.7\linespacing\@plus\linespacing}{.5\linespacing}%
	{\bfseries
		\centering
}}
\def\@secnumfont{\bfseries}
\makeatother

\usepackage{amsmath}
\usepackage{amsfonts}
\usepackage{amssymb}
\usepackage{graphicx}
\usepackage{xcolor}
\usepackage{soul}
\usepackage{ulem}
\usepackage{lineno}
\setcounter{MaxMatrixCols}{30}
\newtheorem{theorem}{Theorem}[section]

\newtheorem{definition}[theorem]{Definition}

\newtheorem{proposition}[theorem]{Proposition}
\newtheorem{remark}[theorem]{Remark}

\numberwithin{equation}{section}
\usepackage{lmodern}
\usepackage[colorlinks=true, allcolors=blue]{hyperref}
\colorlet{blu1}{blue!70!black}
\colorlet{blu2}{blue!50!black}
\colorlet{blu3}{blue!70!red}
\colorlet{blu4}{blue!60!green}
\colorlet{red1}{red!80}
\colorlet{red2}{red!50!black}
\colorlet{red3}{red!70!yellow}
\colorlet{red4}{red!50!yellow}
\colorlet{yel1}{yellow!50!black}
\colorlet{yel3}{yellow!20!blue}
\colorlet{gre1}{green!60!blue}
\colorlet{gre2}{green!60!black}
\colorlet{gre3}{green!40!black}
\catcode `\@=12 \textwidth16cm \textheight23cm \hoffset-1cm   \voffset-2.0cm
\begin{document}
\begin{center}
{\bf\Large Weighted Catalan convolution and $(q,2)-$Fock space} \vspace{1.5cm}

{\bf\large Yungang Lu}\vspace{0.6cm}
	
{Department of Mathematics, University of Bari ``Aldo Moro''}\vspace{0.3cm}
	
{Via E. Orabuna, 4, 70125, Bari, Italy}
\end{center}\vspace{1cm}

\begin{center} {\large ABSTRACT}
\end{center}
Motivated by the study of certain combinatorial properties of $(q,2)-$Fock space, we compute explicitly a sequence driven by the Catalan's convolution and parameterized by $1+q$. As an application of this explicit form, we calculate the number of pair partitions involved in the determination of the vacuum--moments of the field operator defined on the $(q,2)-$Fock space.\vspace{0.8cm}

\noindent{\it Keywords:} $(q,2)-$Fock space; vacuum expectation; pair partition.\vspace{0.2cm}

\noindent{\it AMS Subject Classification:} 05A30, 05C89, 81P40
\vspace{1.5cm}

\section{Introduction}\label{CCW-sec1}


In \cite{Catalan1887}, Catalan introduced what is nowadays known as the {\bf Catalan's convolution formula}, a fundamental result in combinatorics. This formula, as discussed by various authors (refer to  \cite{BowReg2014}, \cite{LarFre2003}, \cite{Regev2012}, \cite{Shapiro76}, and reference therein), provides a relationship between certain combinatorial sequences: for any $n\in\mathbb{N}^*$ and $m\in\{1,\ldots,n\}$,
\begin{align}\label{CCW01a}
&\sum_{i_1,\ldots,i_m\ge0; \,i_1+\ldots+i_m=n-m} C_{i_1}\cdot\ldots\cdot C_{i_m}\notag\\
\overset{j_k:=i_k+1,\,\forall k}{=} &\sum_{j_1,\ldots,j_m\ge1; \,j_1+\ldots+j_m=n} C_{j_1-1}\cdot\ldots\cdot C_{j_m-1}
=\frac{m}{2n-m}\binom{2n-m}{n}
\end{align}
Hereinafter, for any $k\in{\mathbb N}^*$,
$C_k$ denotes the $k-$th Catalan's number. Moreover,
this formula has found application in resolving various problems, as shown in \cite{Andrews1987}, \cite{Andrews2011}, \cite{Chen-Chu2009} and references provided therein.

The Catalan convolution formula is closely tied to the decomposition of a non--crossing pair partition into its {\it closed components} (defined in Definition \ref{CCW02} below). This relationship yields an interesting consequence: when one sums for all $m\in\{1,\ldots,n\}$ of the expression in \eqref{CCW01a}, the result is none other than the $n-$Catalan's number $C_n$:
\begin{align}\label{CCW01c}
&\sum_{m=1}^n\sum_{i_1,\ldots,i_m\ge0; \,i_1+\ldots+i_m=n-m} C_{i_1}\cdot\ldots\cdot C_{i_m}=\sum_{m=1}^n\frac{m}{2n-m}\binom{2n-m}{n}
=C_n
\end{align}
In this paper, we delve into investigation of the sum for all $m\in\{1,\ldots,n\}$ of the expression in \eqref{CCW01a} but with non trivial weights. More precisely,  we compute the following expression: \begin{align}\label{CCW01d}
&w_n(q):=\sum_{m=1}^n(1+q)^m\sum_{i_1,\ldots,i_m\ge0; \,i_1+\ldots+i_m=n-m} C_{i_1}\cdot\ldots\cdot C_{i_m},\qquad\forall n\in\mathbb{N}^*
\end{align}
Hereinafter, $q\in[-1,1]$.

The above expression naturally arises in the calculation of the vacuum expectation of a specific sequence of product of the creation and annihilation operators, defined on a particular Fock space, known as the $(q,2)-$Fock space. This Fock space holds significance for the following reasons:

From an analytical perspective, the $*-$algebra generated by the creation and annihilation operators within this Fock space generalizes the standard $q-$algebra: the scalar $q$ in the well--known $q-$communication relation is replaced by $q{\bf p}$ and ${\bf p}$ is a, usually non--trivial, projector.

From a combinatorial viewpoint, within the $(q,2)-$Fock space, the vacuum moments of the {\it field operators} (namely, the sums of the creation and annihilation operators) are determined by a family of sets ${\mathcal P}_n$'s. For each $n\ge3$, ${\mathcal P}_n$ is actually smaller (respectively, bigger) than $PP(2n):=$the totality of all pair partitions of the set $\{1,\ldots,2n\}$ (respectively, $NCPP(2n):=$ the totality of all non--crossing pair partitions of the set $\{1,\ldots,2n\}$).

In Section \ref{CCW-sec2}, we begin by introducing pair partitions and non--crossing pair partitions over any  totally ordered set, which severs as generalization of previously mentioned $PP(2n)$ and $NCPP(2n)$. We then define the {\it decomposition of non--crossing pair partitions} by using some well--known concepts, such as the {\it depth} of a pair within a given non--crossing pair partition, and the process of {\it gluing} pair partitions.

Section \ref{CCW-sec3} is devoted to presenting and demonstrating our main results in this paper. The Weighted Catalan convolution is deduced in terms of $(q,2)-$Fock space in Theorem \ref{CCW21}. Additionally, Theorem \ref{CCW22} provides an explicit representation of the Weighted Catalan convolution.

In Section \ref{CCW-sec4}, our main goal is to calculate $\vert{\mathcal P}_n\vert:=$the cardinalities of the sets ${\mathcal P}_n$, as an application of two theorems presented in Section \ref{CCW-sec3}. The explicit formulation to $\vert{\mathcal P}_n\vert$ is provided in Theorem \ref{CCW23}.

\section{Pair partitions and some related concepts; Elementary properties of $(q,2)-$Fock space} \label{CCW-sec2}

In this section, we provide technical background information to prepare for the statement and proof of our main results, namely, Theorems \ref{CCW21}, \ref{CCW22} and \ref{CCW23}.

\subsection{Pair partitions and some related concepts} \label{CCW-sec2-1}

Firstly, let's recall that for any $n\in{\mathbb N}^*$, a {\bf pair partition} of the set $\{1,\ldots,2n\}$ is defined as a collection of pairs $\{(i_h, j_h)\}_ {h=1}^n$, where,
\begin{align*}\{i_h,j_h:h=1,\ldots,n\}=\{1,\ldots,2n\};
\quad\ 1=i_1<\ldots<i_n<2; \quad\ i_h<j_h,\ \forall h\in\{1,\ldots,n\}
\end{align*}
Furthermore, a pair partition $\{(i_h,j_h)\}_ {h=1}^n$ is referred to as {\bf non--crossing} if the following equivalence holds for any $1\le k<h\le n$:
\begin{align*}i_k<i_h<j_k\ \iff\ i_k<j_h<j_k
\end{align*}
In this context, $i_h$'s are commonly known as the {\bf left indices}, and $j_h$'s as the {\bf right indices} of the pair partition $\{(i_h, j_h)\}_ {h=1}^n$.

For any $m,n\in{\mathbb N}^*$, it is common to denote:
\begin{align*}
&\{-1,1\}^m:=\big\{\text{functions on }\{1,\ldots,m\} \text{ and valued in }\{-1,1\} \big\}\notag\\
&\{-1,1\}^{2n}_+:=\big\{\varepsilon\in \{-1,1\}^{2n} :\sum_{h=1}^{2n}\varepsilon(h) =0,\, \sum_{h=p}^{2n}\varepsilon(h)\ge 0,\ \forall p\in \{1,\ldots,2n\}\big\}  \notag\\
&\{-1,1\}^{2n}_{+,*}:=\big\{\varepsilon\in \{-1,1\}^{2n}_+ :\sum_{h=p}^{2n}\varepsilon(h) =0 \text{ only for }p=1\big\}  \notag\\
&\{-1,1\}^{2n}_-:=\{-1,1\}^{2n}\setminus \{-1,1\}^{2n}_+
\end{align*}
Moreover, for any $\varepsilon\in\{-1,1\}^{2n}$, the following notations are frequently employed:
\begin{align*}
&PP(2n,\varepsilon):=\big\{\{(i_h, j_h)\}_ {h=1}^n\in PP(2n):\,\varepsilon^{-1}(\{-1\}) =\{i_h:h\in\{1,\ldots,n\}\}\big\}\notag\\
&NCPP(2n,\varepsilon):=NCPP(2n)\cap
PP(2n,\varepsilon)
\end{align*}
As shown in \cite{Ac-Lu96} and \cite{Ac-Lu2022a}, $PP(2n,\varepsilon)=\emptyset$ when $\varepsilon\in\{-1,1\}^{2n}_-$; however, for the case where $\varepsilon\in\{-1,1\}^{2n}_+$, the following observations hold:

$\bullet$ the set $NCPP(2n,\varepsilon)$  comprises exactly one element, which will be denoted as $\{(l^\varepsilon_h, r^\varepsilon_h)\}_ {h=1}^n$ throughout this paper;

$\bullet$ $\varepsilon\in\{-1,1\}^{2n}_{+,*}$ if and only if $r^\varepsilon_1=2n$;

$\bullet$ $PP(2n,\varepsilon)$ contains $\prod_{h=1}^n (2h-l^\varepsilon_h)$ elements;

$\bullet$ any $\{(i_h, j_h)\}_ {h=1}^n\in PP(2n,\varepsilon)$ satisfies the following:
\[i_h=l^\varepsilon_h,\ \forall h\in\{1,\ldots,n\};\quad \big\{j_h:h\in\{1,\ldots,n\} \big\}=\big\{r^\varepsilon_h:h\in\{1,\ldots,n\}\big\}\]

The concept of pair partitions and non--crossing pair partitions can be easily extended to any arbitrary totally ordered set. We will use the notation $B^V$ to denote the set of maps from $V$ to $B$ for any sets $B$ and $V$. Specifically, when $B=\{-1,1\}$ and $V=\{1,\ldots,m\}$ with $m\in\mathbb{N}^*$, $B^V$ coincides with $\{-1,1\}^m$ as mentioned earlier.

Moreover, for any $n\in\mathbb{N}^*$ and a set  $V:=\{ v_1,\ldots,v_{2n}\}$ ordered as $v_1<\ldots<v_{2n}$,
we introduce $\{-1,1\}^V_\pm$ and $\{-1,1\}^V_{+,*}$ as generalizations of $\{-1,1\}^{2n}_\pm$ and $\{-1,1\}^{2n}_{+,*}$ respectively:
\begin{align}\label{CCW19a3}
&\{-1,1\}^V_+:=\big\{\varepsilon\in \{-1,1\}^V :\sum_{h=1}^{2n}\varepsilon(v_h) =0,\, \sum_{h=p}^{2n}\varepsilon(v_h)\ge 0,\ \forall p\in \{1,\ldots,2n\}\big\}  \notag\\
&\{-1,1\}^V_-:=\{-1,1\}^V\setminus \{-1,1\}^V_+ \notag\\
&\{-1,1\}^V_{+,*}:=\big\{\varepsilon\in \{-1,1\}^V_+ : \sum_{h=p}^{2n}\varepsilon(v_h)= 0,\ \text{only for } p=1\big\}
\end{align}
Starting from this point, for any set $V=\{v_1,\ldots, v_{2n}\}$ consisting $2n$ elements, we will use the convention that $v_1<\ldots< v_{2n}$ unless otherwise specified.

\begin{remark} In the definition of $\{-1,1\}^V_+$, the condition $\sum_{h=p}^{2n}\varepsilon(v_h)\ge 0$ for any $p\in \{1,\ldots,2n\}$ can be equivalently replaced by $\sum_{h=1}^{k}\varepsilon(v_h)\le 0$ for any $k\in \{1,\ldots,2n\}$ since $\sum_{h=1}^{2n} \varepsilon(v_h)=0$.
\end{remark}

The concept of pair partition can be extended as follows: Given a set $V:=\{v_1,\ldots,v_{2n}\}$,
$\{(v_{i_h}, v_{j_h})\}_ {h=1}^n$ is a pair partition (or non--crossing pair partition) of $V$ if $\{({i_h}, {j_h})\}_ {h=1}^n$ is a pair partition (or non--crossing pair partition) of $\{1,\ldots,2n\}$. In other words, a pair partition, particularly a non--crossing pair partition, of the set $V:=\{v_1,\ldots,v_{2n}\}$ is defined based on the indices of $v_j$'s.

Generalizing the concepts of $PP(2n)$ and $NCPP(2n)$, we denote:
\begin{align*}
PP(V):=&\text{the totality of pair partitions of }V \notag\\
NCPP(V):=&\text{the totality of non--crossing pair partitions of }V
\end{align*}
As a result, $PP(2n)=PP(V)\Big\vert_{V=\{1,\ldots, 2n\}},\ NCPP(2n)=NCPP(V)\Big\vert_{V=\{1,\ldots,2n\}}$.
Additionally, we introduce
\begin{align}\label{CCW19a2}NCPP_*(V):=\big\{\{(v_{i_h}, v_{j_h})\}_ {h=1}^n\in NCPP(V):j_1=2n \big\}
\end{align}

Let $\tau: PP(V)\longmapsto \{-1,1\}^{V}$ be the following map: for any $\theta:=\{(v_{l_h},v_{r_h})\}_{h=1}^n\in PP(V)$, $\varepsilon:=\tau(\theta)$ is defined as
\begin{align*}
\varepsilon(v_j):=\tau(\theta)(v_j):=\begin{cases}
1,&\text{ if }j\in \{r_1,r_2,\ldots, r_n\}\\
-1,&\text{ if }j\in \{l_1,l_2,\ldots, l_n\}\\
\end{cases},\quad \forall j\in\{1,\ldots,n\}
\end{align*}
Clearly, $\tau$ maps $PP(V)$ into $\{-1,1\}^{V}_+$ due to the following observations:

$\bullet$ $\sum_{h=1}^{2n}\varepsilon(v_h)= \big\vert\big\{r_h:h \in\{1,\ldots,n\}\big\}\big\vert -\big\vert\big\{l_h:h\in\{1, \ldots,n\}\big\}\big\vert =n-n=0$;

$\bullet$ for any $p\in \{1,2,\ldots,2n\}$, it is clear that $\sum_{h=p}^{2n}\varepsilon(v_h)\ge 0$ because $r_j>l_j$ for any $j\in\{1,\ldots,n\}$.\\
On the other hand, for any $n\in\mathbb{N}^*$, $V:=\{v_1,\ldots,v_{2n}\}$ and $\varepsilon\in \{-1,1\}^{V}_+$, we can extend the definitions of the sets $PP(2n, \varepsilon)$ and $NCPP(2n,\varepsilon)$ as follows:
\begin{align*}
&PP(V,\varepsilon):=\tau^{-1}(\varepsilon)\notag\\
:=&\big\{\{(v_{l_h},v_{r_h})\}_{h=1}^n\in PP(V): \varepsilon(v_{l_h})=-1,\, \varepsilon(v_{r_h})=1, \forall h=1,\ldots,n\big\}\notag\\
&NCPP(V,\varepsilon):=\tau^{-1}(\varepsilon)\cap NCPP(V)
\end{align*}
Moreover, it is worth noting that the restriction of $\tau$ to the set $NCPP(V)$ establishes a bijection between $NCPP(V)$ and $\{-1,1\}^{V}_{+}$ as demonstrated in \cite{Ac-Lu96} that
$\big\vert\tau^{-1}(\varepsilon)\cap NCPP(V)\big\vert =1$ for any $\varepsilon\in \{-1,1\}^{V}_{+}$. Moreover, in a more restrictive sense, $\tau$ reduces a bijection between $NCPP_*(V)$ and $\{-1,1\}^{V}_{+,*}$.

In general, for any two elements $\{(v_{l_h},v_{r_h})\} _{h=1}^n$ and $\{(v_{i_h},v_{j_h})\}_{h=1}^n$ in $PP(V)$, the equality $\tau(\{(v_{l_h},v_{r_h})\} _{h=1} ^n) =\tau(\{(v_{i_h},v_{j_h})\}_{h=1}^n)$ holds if and only if $l_h=i_h$ for any $h\in\{1,\ldots,n\}$. Moreover, for any $\{(v_{l_h},v_{r_h})\}_{h=1}^n\in PP(V)$, as shown in \cite{Ac-Lu2022a}, there are $\prod_{h=1}^n (2h-l_h)$ elements of $PP(V)$ with the $\tau-$image as $\tau(\{(v_{l_h},v_{r_h})\}_{h=1}^n)$.

In the subsequent discussions, for any $n\in\mathbb{N}^*$ and the set $V:=\{v_1,\ldots,v_{2n} \}$, the following terminology will be used:

$\bullet$ $\tau(\theta)\in \{-1,1\}^{V}_+$ is referred to as the {\bf counterpart} of $\theta$ for any $\theta\in NCPP(V)$;

$\bullet$ the unique element of  $\tau^{-1} (\varepsilon)\cap NCPP(V)$ is called the {\bf counterpart} of $\varepsilon$ for any $\varepsilon\in \{-1,1\}^{V}_+$.

The counterpart of $\varepsilon\in\{-1,1\}^{V}_+$ mentioned above will be denoted as $\{(v_{l^\varepsilon _h}, v_{r^\varepsilon_h})\}_{h=1}^n$ which is identical to $\{(l^\varepsilon_h, r^\varepsilon_h)\}_ {h=1}^n$ when $v_k=k$ for all $k\in\{1,\ldots,2n\}$.

\begin{remark} Indeed, for any $n\in\mathbb{N}^*$, $V:=\{v_1,\ldots,v_{2n}\}$ order as $v_1<\ldots <v_{2n}$, and for any $\varepsilon\in \{-1,1\}^{V}_+$, all elements in $PP(V,\varepsilon)$ must share the same left--indices $\varepsilon^{-1}(\{-1\})$ and the same set of right--indices $\varepsilon^{-1}(\{1\})$.
\end{remark}

To state and prove our main results, we first introduce a {\it decomposition} of a non--crossing pair partition. To do this, we'll begin by revisiting the concepts of the {\it depth} of a pair in a given non--crossing pair partition and the {\it gluing} of pair partitions.

\begin{definition}\label{de-depth} For any $n\in\mathbb {N}^*$ and $V:=\{v_1,\ldots,v_{2n}\}$ ordered as $v_1<\ldots<v_{2n}$, for any $\theta:=\{(v_{l_h}, v_{r_h})\} _{h=1}^n \in NCPP(V)$ and $k\in \{1,\dots ,n\}$, we define
\begin{align*}
d_\theta(v_{l_k},v_{r_k}):=\left\vert \left\{h\in \{1,\dots ,n\} \ : \ l_{h}< l_{k}<r_{k}< r_{h}\right\}\right\vert
\end{align*}
as the {\bf depth} of the pair $(v_{l_k},v_{r_k})$  in $\theta$. Moreover, for any $\varepsilon\in\{-1,1\}^V _+$ with the counterpart of $\theta$, we denote $d_{\varepsilon}:=d_{\theta}$.
\end{definition}

It's important to note that we have defined {\it the depth of a pair} in a given pair partition $\theta$ only when $\theta$ is non--crossing.

Let

$\bullet$ $m\ge 2$ and $\{n_1,\ldots,n_m\} \subset\mathbb{N}^*$;

$\bullet$ $V_p:=\{v_{p,1},\ldots,v_{p,2n_p}\}$ be a set
with the order $v_{p,1}<\ldots<v_{p,2n_p}$, where $p\in\{1,\ldots,m\}$; moreover, $V_p$'s are assumed to be pairwise disjoint;

$\bullet$ ``$<$'' be a total order on $V:=\bigcup_p V_p$ such that its restriction to each $V_p$ follows the order $v_{p,1}<\ldots<v_{p,2n_p}$.

It's essential to remark that, under the order ``$<$'' on $V$, there is no specific rule for combining $v'\in V_p$ and $v''\in V_q$ when $p\ne q$.

Let's denote $n:=n_1+\ldots+n_m$. The disjointness of $V_p$'s implies that the cardinality of $V$ is equal to the sum of the cardinalities of $V_p$ for $p$ ranging from 1 to $m$, i.e., $\vert V\vert=\sum_{p=1}^m\vert V_p\vert=2\big(n_1+\ldots+n_m\big)=2n$. For any pair partitions $\theta^{(p)}:=\{(v_{p,l^{(p)}_h} ,v_{p, r^{(p)}_h})\}_{h=1}^{n_p}\in PP(V_p)$ with $p\in\{1, \ldots,m\}$, one can {\it glue} $\theta^{(p)}$'s together by introducing:
\begin{align*}
&\theta^{(1)}\uplus\ldots\uplus\theta^{(m)}:=\{(v_{1, l^{(1)}_h}, v_{1,r^{(1)}_h})\}_{h=1}^{n_1}\uplus \ldots \uplus \{(v_{m,l^{(m)}_h}, v_{m,r^{(m)}_h})\}_{h=1}^{n_m}\notag\\
:=&\big\{(v_{i_k},v_{j_k}):k\in\{1,\ldots,n\}
\text{ and each  }(v_{i_k},v_{j_k})\text{ is a certain }(v_{p,l^{(p)}_h}, v_{p,r^{(p)}_h})\big\}
\end{align*}
For example, if $V':=\{1,3,4,5\}$ and $V'':=\{2 ,6\}$, along with $\theta':=\{(1,5),(3,4)\}\in PP(V')$ and $\theta'':=\{(2,6)\}\in PP(V'')$, then $\theta' \uplus\theta''=\{(1,5),(3,4),(2,6)\}$. Moreover, for any non--empty subsets $P^{(1)}\subset PP(V_1),\ldots, P^{(m)}\subset PP(V_m)$, one defines
\begin{align}\label{CCW16g1}
&P^{(1)}\uplus\ldots\uplus P^{(m)}\notag\\
:=&\big\{\theta^{(1)}\uplus\ldots\uplus \theta^{(m)}:\theta^{(p)}\in P(V_p)\text{ for any } p\in\{1,\ldots,m\}\big\}
\end{align}

\begin{remark} 1) It is evident that $\theta^{(1)}\uplus\ldots\uplus \theta^{(m)}$ belongs to $PP(V)$ if, for any $p\in\{1, \ldots,m\}$, $\theta^{(p)}:=\{ (v_{p,l^{(p)}_h},v_{p, r^{(p)}_h})\}_{h=1}^{n_p}\in PP(V_p)$; moreover, ``$\uplus$'' is an associative operation.

2) It's worth noting that for any $P^{(1)}\subset PP(V_1)$ and $P^{(2)}\subset PP(V_2)$, the gluing $P^{(1)}\uplus P^{(2)}$ is contained in $PP(V_1\cup V_2)$; however,

$\bullet$ $P^{(1)}\uplus P^{(2)}$ may not necessarily be a subset of  $NCPP(V_1\cup V_2)$ even if $P^{(p)}\subset NCPP(V_p)$ for both $p\in\{1,2\}$; e.g., in the example mentioned earlier, both $\theta'$ and $\theta''$ belong to $NCPP(V')$ and $NCPP(V'')$ respectively, but the gluing $\theta' \uplus\theta''$ belongs to $PP(V'\cup V'')$ but not to  $NCPP(V'\cup V'')$.

$\bullet$ $PP(V_1)\uplus PP(V_2)$ is not necessarily equal to $PP(V_1\cup V_2)$; e.g., consider the case mentioned earlier in \eqref{CCW16g1}, $\{(1,2),(3,4), (5,6)\}$ belongs to $PP(V_1\cup V_2)$ but not to $PP(V_1)\uplus PP(V_2)$; in fact an element of $PP (V_1\cup V_2)$, says $\{(v_{i_h},v_{j_h})\}_{h=1} ^{n_1+n_2}$, belongs to $PP(V_1)\uplus PP(V_2)$ if and only if for any $h\in\{1,\ldots,n_1+n_2\}$, $v_{i_h}$ and $v_{j_h}$ belong to the {\bf same} $V_p$.
\end{remark}

In the following discussion, for simplicity, one uses notations like the {\it closed and open intervals} of $\mathbb{Z}$ as follows: for any $k,m\in\mathbb{Z}$ such that $k\le m$,
\begin{align*}
[k,m]:=\{h\in\mathbb{Z}:\,k\le h\le m\},\qquad ]k,m[:=\{h\in\mathbb{Z}:\,k< h< m\}
\end{align*}

For any $m\in\mathbb{N}^*$ and $\varepsilon\in \{-1,1\}^{m}$, let's define the function $S_\varepsilon:[1,m]\mapsto\mathbb{Z}$ as $S_\varepsilon(k):=\sum_{h=1}^{k}\varepsilon(h)$ for any $k\in[1,m]$. With this function, we can make the following observations:

$\bullet$ the set $S_\varepsilon^{-1}(\{0\})$ is contained in $\{2,4,\ldots\}$;

$\bullet$ in case $m=2n$, $\varepsilon\in \{-1,1\}^{2n}_+$ if and only if $S_\varepsilon\le0 \text{ and } 2n\in S_\varepsilon^{-1} (\{0\})$;

$\bullet$ in case $m=2n$, $\varepsilon\in \{-1,1\}^{2n}_{+,*}$ if and only if $S_\varepsilon\le0$ and $S_\varepsilon^{-1} (\{0\})$ is exactly $\{2n\}$.

For any $n\in\mathbb{N}^*$ and $\varepsilon\in\{-1,1\}^ {2n}_+$, we define $n_\varepsilon:=\vert S_\varepsilon^{-1} (\{0\})\vert $. The condition $S_\varepsilon(2n)=0$ ensures that $n_\varepsilon\ge1$. Moreover, since $S_\varepsilon^{-1}(\{0\})\subset\{2,4, \ldots,2n\}$, we can represent $S_\varepsilon^{-1}(\{0 \})$ as $\big\{2i_p:p\in[1,n_\varepsilon]\big\}$ with $1\le i_1<\ldots<i_{n_\varepsilon}=n$.
Furthermore, adapting the conventions $i_0:=0$,
It's easy to see that for any $p\in[0,n_\varepsilon-1]$,

$\bullet$ $\varepsilon_p:=$the restriction of $\varepsilon$ to the closed interval $V^\varepsilon _p:=[2i_p+1, 2i_{p+1}]$ must belong to $\{-1,1\}^{V^ \varepsilon_p}_{+,*}$ (see \eqref{CCW19a3} for the definition); in other words, $\varepsilon'_p\in\{-1,1\} ^{2(j_{p+1}-j_p)} _{+,*}$, where $\varepsilon'_p$ is obtained by shifting $\varepsilon_p$: $\varepsilon'_p(h):= \varepsilon(h+2i_p)$ for any $h\in[1,2(i_{p+1}-i_p)]$;

$\bullet$ $\varepsilon^0_p:=$the restriction of $\varepsilon$ to the open interval $V^{\varepsilon,0}_p :=]2i_p+1, 2i_{p+1}[$ must belong to $\{-1,1\}_+^{V^{ \varepsilon,0}_p}$, in other words, $(\varepsilon^0 _p)'$ is an element of $\{-1,1\} ^{2(i_{p+1}-i_p-1)}_+$ and is obtained by shifting $\varepsilon^0_p$: $(\varepsilon^0_p)'(h):= \varepsilon (h+2i_p+1)$ for any $h\in[1,2(i_{p+1}-i_p-1)]$.\\
Moreover, holds the following:
\begin{align}\label{CCW02a}
\{1,\ldots,2n\}=\bigcup_{p=1}^{n_\varepsilon}V^{\varepsilon}_{p-1}\ \text{ with }\ V^{\varepsilon}_{p-1}:=[2i_{p-1}+1, 2i_p],\quad \forall p\in [1,n_\varepsilon]
\end{align}

In the context of pair partition, for any $n\in{\mathbb N}^*$ and $\theta:=\{(l_h,r_h)\}_{h=1}^{n}\in NCPP (2n)$, there is a unique $n_\theta\in\{1,\ldots,n\}$ and $0=:j_0< j_1<\ldots<j_{n_\theta}= n$ such that
\begin{align*}
\{s\in\{1,\ldots,n\}:\,d_\theta(l_{s},r_{s})=0\}= \{j_p+1:\,p\in[0,n_\theta-1] \}
\end{align*}
This condition can also be expressed as:
\begin{align*}
l_{j_p+1}=2j_{p}+1\,\text{ and }\, r_{j_p+1}=2j_{p+1}, \qquad \forall p\in[0,n_\theta-1]
\end{align*}
Therefore, we have the following decomposition:
\begin{align}\label{CCW02b}
\theta:=\{(l_h,r_h)\} _{h=1}^{n}=\biguplus_{p=0} ^{n_\theta-1} \{(l_h,r_h)\}_{h=j_p+1} ^{j_{p+1}}
\end{align}

\begin{remark}\label{CCW03} It's evident that for any $n\in\mathbb{N}^*$ and $\theta:=\{(l_h,r_h)\}_{h=1}^{n} \in NCPP(2n)$, if \eqref{CCW02b} holds, then for any $p\in[0,n_\theta-1]$:

$\bullet$ $\{(l_h,r_h)\}_{h=j_p+1}^{j_{p+1}}$ runs over $NCPP_*([2j_p+1,2j_{p+1}])$ (see \eqref{CCW19a2} for the definition);

$\bullet$ $\{(l_h,r_h)\}_{h=j_p+2}^{j_{p+1}}$ runs over the set $NCPP(]2j_p+1,2j_{p+1}[)$ as  $\{(l_h,r_h)\}_{h=1}^{n}$ running over $NCPP(2n)$;

$\bullet$ for any $j_p+1<h\le j_{p+1}$, the pair $(l_h, r_h)$ locates inside of the pair $(l_{j_p+1}, r_{j_p+1} )$ and therefore $d_\theta(l_h,r_h)\ge1$.
\end{remark}

It's clear that, for any $n\in\mathbb{N}^*$ and $\varepsilon\in \{-1,1\}^{2n}_+$, denoting $\theta$ as the counterpart of $\varepsilon$, the $n_\varepsilon$, $i_p$'s in \eqref{CCW02a}, and $n_\theta$, $j_p$'s in \eqref{CCW02b} adhere to the following relationships:
\begin{align*}
n_\varepsilon=n_{\theta}\text{ and } i_p=j_p,
\quad\forall\,p\in[1,n_\theta]
\end{align*}

\begin{definition}\label{CCW02}For any $n\in\mathbb{N}^*$ and a $\theta:=\{(l_h,r_h)\} _{h=1}^{n}\in NCPP(2n)$ that satisfying \eqref{CCW02b},

$\bullet$ each $\{(l_h,r_h)\}_{h=j_p+1} ^{j_{p+1}}$ is called a {\bf closed components} of $\theta$;

$\bullet$ each $\{(l_h,r_h)\}_{h=j_p+2} ^{j_{p+1}}$ is termed as a {\bf open components} of $\theta$;

$\bullet$ \eqref{CCW02b} is named the {\bf closed components decomposition} of $\theta$.
\end{definition}

It's obvious that for any $n\in\mathbb{N}$ and $\theta\in NCPP(2n)$, the number of closed components in
$\theta$ is equal to the number of open components in $\theta$.

\subsection{$(q,2)-$Fock space}\label{CCW-sec2-2}

Now we begin to explore the $(q,2)-$Fock space, a specific instance of the $(q,m)-$Fock space introduced in \cite{YGLu2022a}. Let $\mathcal{H}$ be a Hilbert space with a scalar product $\langle \cdot,\cdot \rangle$ of the dimension greater than or equal to 2 (this convention will be maintained throughout), let $\mathcal{H}^{\otimes n}$ be its $n-$fold tensor product for any $n\ge 2$. One defines, for any $q\in[-1,1]$,

$\bullet$ $\lambda_1:={\bf 1}_{\mathcal{H}}$, i.e., the identity operator on $\mathcal{H}$;

$\bullet$ for any $n\in\mathbb{N}$, $\lambda_{n+2}$ be the {\bf linear} operator on $\mathcal{H}^{\otimes (n+2)}$ such that
\begin{equation*}
\lambda_{n+2}:={\bf 1}_{\mathcal{H}}^{\otimes n}\otimes \lambda_2\,\text{ and } \lambda_2(f\otimes g):=f\otimes g+q g\otimes f\,,\quad\forall f,g\in\mathcal{H}
\end{equation*}

The positivity of $\lambda_n$'s can be easily verified (see, for instance, \cite{BoKumSpe97}, \cite{Bo-Spe91}, \cite{Ji-Kim2006}) and which ensures that
\[
\mathcal{H}_n:=\text{ the completion of the } \big(\mathcal{H}^{\otimes n}, \langle \cdot,\lambda_{n} \cdot \rangle_{\otimes n}\big)/Ker\langle \cdot, \lambda_{n}\cdot \rangle_{\otimes n}
\]
is a Hilbert space, where $\langle \cdot,\cdot \rangle_{\otimes n}$ is the usual tensor scalar product. One denotes the scalar product of $\mathcal{H}_n$ as $\langle \cdot,\cdot \rangle_{n}$, where $\langle \cdot,\cdot \rangle_{1}:=\langle \cdot,\cdot \rangle $ and for any $n\ge2$,
\begin{equation*}
\langle F,G \rangle_{n}:=\langle F,\lambda_nG \rangle_{\otimes n} \,,\quad\forall F,G\in \mathcal{H}^{\otimes n}
\end{equation*}
or equivalently for any $n\in \mathbb{N}$, for any
$F\in \mathcal{H}^{\otimes (n+2)}$, $G\in \mathcal{H}^{\otimes n}$, and for any $f,g\in \mathcal{H}$,
\begin{align*}
\langle F,G\otimes f\otimes g \rangle_{n+2}:=&\langle F,G\otimes f\otimes g\rangle_{\otimes (n+2)}+ q\langle F,G\otimes g\otimes f\rangle_{\otimes (n+2)}
\end{align*}

\begin{definition}\label{(q,2)-Fock}Let $\mathcal{H}$ be a Hilbert space, and for any $q\in[-1,1]$ and $n\in\mathbb{N}^*$, let $\mathcal{H}_n$ be the Hilbert space as described above. The Hilbert space  $\Gamma_{q,2} (\mathcal{H}) :=\bigoplus_{n=0}^\infty\mathcal{H}_n $ is referred to as the {\bf $(q,2)-$Fock space} over $\mathcal{H}$, where $\mathcal{H}_{0} :=\mathbb{C}$. Moreover,

$\bullet$ the vector $\Phi:=1\oplus0\oplus0 \oplus\ldots$ is called the {\bf vacuum vector} of $\Gamma_{q,2}(\mathcal{H})$;

$\bullet$ for any $n\in\mathbb{N}^*$, $\mathcal{H}_n$ is named as the {\bf $n-$particle space}.
\end{definition}

Throughout, we will use simply the symbol $\langle \cdot,\cdot\rangle$ and $\Vert\cdot\Vert$ to denote the scalar product and the induced norm on both $\Gamma_{q,2}(\mathcal{H})$ and $\mathcal{H}_n$'s.

It's straightforward to observe the following {\bf consistency} of $\langle \cdot,\cdot \rangle_{n}$'s: for any non--zero $f\in\mathcal{H}$ and any $n\in\mathbb{N}^*$, holds the following:
\begin{equation*}
\Vert f\otimes F\Vert=0\text{ whenever } F\in\mathcal{H}_{n} \text{ verifying } \Vert F\Vert=0
\end{equation*}
This consistency guarantees that, for any $f\in\mathcal {H}$, the operator that maps $F\in\mathcal{H}_{n}$ to
$f\otimes F\in \mathcal{H}_{n+1}$ is a well--defined linear operator from $\mathcal{H}_{n}$ to $\mathcal{H}_{n+1}$.

\begin{definition}\label{creaOn(q,2)} For any $f\in \mathcal{H}$, the {\bf $(q,2)-$creation operator} (with the test function $f$) $A^+(f)$ is defined as such a {\bf linear} operator on $\Gamma_{q,2}(\mathcal{H})$ that
\begin{equation*}
A^+(f)\Phi:=f\,,\quad A^+(f)F:=f\otimes F,\quad \forall n\in\mathbb{N}^*\text{ and }F\in\mathcal{H}_n
\end{equation*}
\end{definition}

Let $\mathcal{H}$ be a Hilbert space and $q$ belongs to $[-1,1]$. The following elementary properties of $(q,2)-$Fock space (even more general, $(q,m)-$Fock space) over ${\mathcal H}$ are proved in \cite{YGLu2022a} and \cite{YGLu2022e}:

1) For any $f\in\mathcal{H}$, the $(q,2)-$creation operator $A^+(f)$ is bounded, and its norm is given by:
\begin{align*}
\Vert A^+(f)\Vert =\Vert f\Vert\cdot\begin{cases} \sqrt{1+q},&\text{ if }q\in[0,1];\\ 1,&\text{ if }q\in[-1,0)  \end{cases}
\end{align*}
Consequently, $A(f):=\big(A^+(f)\big)^*$ is well--defined, called the {\bf $(q,2)-$annihilation operator} with the test function $f\in \mathcal{H}$.

2) The $(q,2)-$annihilation operator $A(f)$ for any $f\in \mathcal{H}$ can equivalently be defined by its linearity with respect to the test function and the following equalities: $A(f)\Phi=0$ and
\begin{align}\label{CCW05g1}
&A(f)(g_1\otimes\ldots\otimes g_n)=A(f)A^+(g_1) \ldots A^+(g_1)\Phi\notag\\
=&\begin{cases}\langle f,g_1\rangle\Phi,&\text{ if } n=1;\\  \langle f, g_1\rangle g_2+q\langle f, g_2\rangle g_1,&\text{ if }n=2;\\  \langle f, g_1\rangle g_2\otimes\ldots\otimes g_n,&\text{ if }n> 2  \end{cases}
\end{align}
for any $n\in\mathbb{N}^*$ and $\{g_1,\ldots, g_n\}\subset\mathcal{H}$.

3) For any $f\in\mathcal{H}$, one has
$\Vert A(f)\big\vert_{\mathcal{H}_{1}}\Vert =\Vert A^+(f)\big\vert_{\mathcal{H}_{0}}\Vert =\Vert f\Vert$ and for any $n\in\mathbb{N}^*$,
\begin{align*}
&\Vert A(f)\big\vert_{\mathcal{H}_{n+1}}\Vert =\Vert A^+(f)\big\vert_{\mathcal{H}_n}\Vert\notag\\ &
\Vert A(f)A^+(f)\big\vert_{\mathcal{H}_n}\Vert  =\Vert A^+(f)A(f)\big\vert_{\mathcal{H}_n}\Vert =\Vert A(f)\big\vert_{\mathcal{H}_n}\Vert ^2
\end{align*}
hereinafter, for any $m\in{\mathbb N}$ and for any linear operator $B$ on the $(q,2)-$Fock space, $B\big\vert_{{\mathcal H}_m}$ is defined as its restriction to ${\mathcal H}_m$. Consequently
\begin{align*}
\Vert A(f)\Vert =\Vert A^+(f)\Vert\,;\quad \Vert A(f)A^+(f)\Vert  =\Vert A^+(f)A(f)\Vert =\Vert A(f)\Vert^2
\end{align*}

4) For any $m\in{\mathbb N}^*$ and $\varepsilon\in  \{-1,1\}^m$, the vacuum expectation
\begin{align*}
\big\langle \Phi,A^{\varepsilon(1)}(f_1)\ldots A^{\varepsilon(m)}(f_m)\Phi\big\rangle
\end{align*}
differs from zero only if $m$ is even, i.e. $m=2n$, and
$\varepsilon$ belongs to $\{-1,1\}^{2n}_+$;

5) For any $n\in{\mathbb N}^*$, $\varepsilon\in  \{-1,1\}^{2n}_+$ and $\{f_1,\ldots,f_{2n}\}\subset {\mathcal H}$, the vector $A^{\varepsilon(1)}(f_1)\ldots$ $ A^{\varepsilon(2n)}(f_{2n})\Phi$ belongs to ${\mathcal H}_0$ and, moreover, for any $C\in{\mathbb C}$, the following equivalence holds:
\begin{align}\label{CCW28b}
A^{\varepsilon(1)}(f_1)\ldots A^{\varepsilon(2n)}(f_{2n})\Phi=C\Phi\iff
\big\langle \Phi,A^{\varepsilon(1)}(f_1)\ldots A^{\varepsilon(2n)}(f_{2n})\Phi\big\rangle=C
\end{align}

\section{Weighted Catalan's convolution}\label{CCW-sec3}

Now, we are ready to state and prove the main results od this paper.

\begin{proposition}\label{CCW20h}
On the $(q,2)-$Fock space over $\mathcal{H}$,
\begin{align}\label{CCW20h0}
A(f)\big(A(f)A^+(f)\big)^nA^+(f)\Phi=(1+q)^n\Vert f\Vert ^{2(n+1)}\Phi,\quad \forall n\in\mathbb{N},\, f\in\mathcal{H}
\end{align}
or equivalently,
\begin{align}\label{CCW20h1}
\big\langle\Phi,  A(f)\big(A(f)A^+(f)\big)^nA^+(f)\Phi \big\rangle=(1+q)^n\Vert f\Vert ^{2(n+1)},\quad \forall n\in\mathbb{N},\, f\in\mathcal{H}
\end{align}
\end{proposition}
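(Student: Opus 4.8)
The plan is to compute the left-hand side of \eqref{CCW20h0} directly, exploiting the fact that the repeated block $A(f)A^+(f)$ acts as a scalar on the one-particle vector $f$. Write $T:=A(f)A^+(f)$. Since $A^+(f)$ sends $\mathcal{H}_1$ into $\mathcal{H}_2$ and $A(f)$ sends $\mathcal{H}_2$ back into $\mathcal{H}_1$, the operator $T$ restricts to a linear map on $\mathcal{H}_1$, and the innermost factor $A^+(f)\Phi$ equals $f\in\mathcal{H}_1$. Thus the whole expression is $A(f)\,T^{n}f$, and everything reduces to understanding how $T$ acts on $f$, for which the explicit formula \eqref{CCW05g1} for the annihilation operator is exactly what is needed.

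The key computation is the base identity $Tf=(1+q)\Vert f\Vert^{2}f$. Indeed, $A^+(f)f=f\otimes f$ is a two-particle vector, so the $n=2$ line of \eqref{CCW05g1} gives
\[
A(f)(f\otimes f)=\langle f,f\rangle f+q\langle f,f\rangle f=(1+q)\Vert f\Vert^{2}f,
\]
which is precisely where the weight $1+q$ is produced. In particular $f$ is an eigenvector of $T\big\vert_{\mathcal{H}_1}$ with eigenvalue $(1+q)\Vert f\Vert^{2}$, so a trivial induction on $n$ (using only the linearity of $T$) yields $T^{n}f=\big((1+q)\Vert f\Vert^{2}\big)^{n}f$ for every $n\in\mathbb{N}$.

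It then remains to apply the outermost annihilation operator. Using the $n=1$ line of \eqref{CCW05g1}, $A(f)f=\langle f,f\rangle\Phi=\Vert f\Vert^{2}\Phi$, hence
\[
A(f)\,T^{n}f=\big((1+q)\Vert f\Vert^{2}\big)^{n}\,A(f)f=(1+q)^{n}\Vert f\Vert^{2(n+1)}\Phi,
\]
which is \eqref{CCW20h0}; the equivalent scalar form \eqref{CCW20h1} follows at once from the equivalence \eqref{CCW28b}, since the resulting vector lies in $\mathcal{H}_0$.

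I do not expect a serious obstacle here: the argument is a two-step application of \eqref{CCW05g1} followed by iteration. The only points requiring care are making sure the correct ($n=2$) case of \eqref{CCW05g1} is invoked for the base identity — this is what distinguishes the present setting from the ordinary $q$-Fock space and accounts for the factor $(1+q)$ rather than $q$ — and checking the boundary cases, namely the degenerate endpoint $q=-1$, where $1+q=0$ forces the expression to vanish for all $n\ge1$ while still matching the stated formula, and the case $n=0$, which reduces directly to $A(f)A^+(f)\Phi=\Vert f\Vert^{2}\Phi$.
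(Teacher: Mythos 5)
Your proof is correct and is essentially the paper's argument in a different dress: the paper also reduces everything to the identity $A(f)A^+(f)A^+(f)\Phi=(1+q)\Vert f\Vert^2A^+(f)\Phi$ (the $n=2$ case of \eqref{CCW05g1}) and runs an induction on $n$, then invokes \eqref{CCW28b} for the equivalence of \eqref{CCW20h0} and \eqref{CCW20h1}, exactly as you do. Your phrasing of the key step as the eigenvector relation $A(f)A^+(f)f=(1+q)\Vert f\Vert^2 f$ on $\mathcal{H}_1$ is merely a repackaging of the paper's inductive step, not a different route.
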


\begin{proof} \eqref{CCW28b} gives the equivalence between \eqref{CCW20h0} and \eqref{CCW20h1}. We see \eqref{CCW20h0} by applying the induction's argument.

For $n=0,1$, the equality in \eqref{CCW20h0} is a trivial consequence of \eqref{CCW05g1}. Suppose its validity for $n$, let's show that for any $f\in\mathcal{H}$,
\begin{align*}
A(f)\big(A(f)A^+(f)\big)^{n+1}A^+(f) \Phi= (1+q)^{n+1}\Vert f\Vert ^{2(n+2)}
\end{align*}
Indeed, since $A(f)A^+(f)A^+(f) \Phi\overset{\eqref{CCW05g1}}= (1+q)\Vert f\Vert ^2A^+(f) \Phi$, we can derive that
\begin{align*}
A(f)\big(A(f)A^+(f)\big)^{n+1}A^+(f) \Phi &=A(f)\big(A(f) A^+(f)\big)^{n}A(f)A^+(f)A^+(f) \Phi\notag\\
&=(1+q)\Vert f\Vert ^2 A(f)\big(A(f)A^+(f)\big)^{n} A^+(f)\Phi\notag\\
&=(1+q)\Vert f\Vert ^2 (1+q)^{n}\Vert f\Vert^{2(n+1)}
\end{align*}
where, the last equality is obtained thanks to the inductive assumption. \end{proof}

\begin{theorem}\label{CCW21} For any $n\in\mathbb{N}$ and $f\in\mathcal{H}$ with the unit norm, and for any $\varepsilon\in\{-1,1\}^{2n}_+$ whose counterpart has the following {\it closed components decomposition}:
\begin{align}\label{CCW21a}
\{(l_h^\varepsilon,r_h^\varepsilon)\}_{h=1}^n=\biguplus_{p=0}^{n_\varepsilon-1}\{(l_h^\varepsilon,r_h^\varepsilon)\} _{h=j_p+1}^{j_{p+1}}
\end{align}
it must be true that:
\begin{align}\label{CCW21b0}
\big\langle\Phi, A(f)A^{\varepsilon(1)}(f)\ldots A^{\varepsilon(2n)}(f) A^+(f) \Phi\big\rangle
=(1+q)^{n_\varepsilon}
\end{align}
Moreover, holds the following formula governed by the  weighted Catalan convolution:
\begin{align}\label{CCW21b}
&\sum_{\varepsilon\in\{-1,1\}^{2n}_+}\big\langle\Phi, A(f)A^{\varepsilon(1)}(f)\ldots A^{\varepsilon(2n)}(f) A^+(f) \Phi\big\rangle\notag\\
=&\sum_{m=1}^{n}(1+q)^{m}\sum_{j_1,\ldots,j_{m}
\ge1\atop j_1+\ldots+j_{m}=n} C_{j_1-1}\ldots C_{j_m-1}
\end{align}
\end{theorem}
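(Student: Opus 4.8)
The plan is to establish the per-word identity \eqref{CCW21b0} first and then deduce the summed identity \eqref{CCW21b} by grouping the terms according to the value of $n_\varepsilon$ and counting. The whole argument rests on one structural feature of the $(q,2)$-Fock space visible in \eqref{CCW05g1}: since all test functions are the same unit vector $f$, and since $q$ enters the annihilation rule \emph{only} in the two-particle case, the running vector stays a scalar multiple of a tensor power of $f$, and the scalar is all we need to track. Concretely, I would evaluate $A(f)A^{\varepsilon(1)}(f)\ldots A^{\varepsilon(2n)}(f)A^+(f)\Phi$ from right to left, starting from $A^+(f)\Phi=f$. Because $\varepsilon\in\{-1,1\}^{2n}_+$, the number of particles present just before $A^{\varepsilon(p)}$ acts is $1+\sum_{h=p+1}^{2n}\varepsilon(h)\ge1$, so the vacuum is never reached prematurely, and an immediate induction via \eqref{CCW05g1} (using $\langle f,f\rangle=1$) shows that every intermediate vector is a scalar multiple of some $f^{\otimes k}$.

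By the three cases of \eqref{CCW05g1}, a creation sends $f^{\otimes k}\mapsto f^{\otimes(k+1)}$ with scalar $1$, an annihilation on $f^{\otimes k}$ with $k\ge3$ (and the final annihilation on $f$) contributes scalar $1$, while an annihilation on $f^{\otimes2}$ contributes the scalar $1+q$. Hence the expectation equals $(1+q)^N$, where $N$ counts the positions $p\in\{1,\ldots,2n\}$ at which an annihilation meets a two-particle state, i.e. $\varepsilon(p)=-1$ and $\sum_{h=p+1}^{2n}\varepsilon(h)=1$, equivalently $\sum_{h=p}^{2n}\varepsilon(h)=0$, i.e. $S_\varepsilon(p-1)=0$. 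Using the description $S_\varepsilon^{-1}(\{0\})=\{2i_1<\cdots<2i_{n_\varepsilon}=2n\}$ recalled before \eqref{CCW02a}, these $p$ are exactly $2i_p+1$ for $p\in[0,n_\varepsilon-1]$ (with $i_0:=0$), that is, the left indices $l^\varepsilon_{j_p+1}=2j_p+1$ of the $n_\varepsilon$ depth-zero pairs, one per closed component in \eqref{CCW21a}. Therefore $N=n_\varepsilon$, which proves \eqref{CCW21b0}; the case in which every pair has depth zero recovers Proposition~\ref{CCW20h}.

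For \eqref{CCW21b} I would substitute \eqref{CCW21b0} into the left-hand side and collect by $n_\varepsilon$, obtaining
\[
\sum_{\varepsilon\in\{-1,1\}^{2n}_+}(1+q)^{n_\varepsilon}=\sum_{m=1}^{n}(1+q)^{m}\,\big\vert\{\varepsilon\in\{-1,1\}^{2n}_+:n_\varepsilon=m\}\big\vert .
\]
It then remains to count, for each $m$, the number of $\varepsilon$ with exactly $m$ closed components. The closed-components decomposition discussed around \eqref{CCW02a} and \eqref{CCW02b} gives a bijection between such $\varepsilon$ and ordered tuples $(\varepsilon_0,\ldots,\varepsilon_{m-1})$ with $\varepsilon_p\in\{-1,1\}^{V^\varepsilon_p}_{+,*}$ on a block of length $2(i_{p+1}-i_p)$, the block sizes $j_k:=i_k-i_{k-1}\ge1$ forming a composition of $n$. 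Describing a single closed component of size $j$ as its outer depth-zero pair together with an arbitrary non-crossing pair partition of the $2(j-1)$ interior points yields $\big\vert\{-1,1\}^{2j}_{+,*}\big\vert=\big\vert NCPP(2(j-1))\big\vert=C_{j-1}$. Multiplying over the $m$ blocks and summing over compositions gives $\big\vert\{\varepsilon:n_\varepsilon=m\}\big\vert=\sum_{j_1+\cdots+j_m=n,\,j_k\ge1}C_{j_1-1}\cdots C_{j_m-1}$, which is precisely the inner sum in \eqref{CCW21b}; as a check, summing over $m$ returns $\vert\{-1,1\}^{2n}_+\vert=C_n$, consistent with \eqref{CCW01c}.

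The computational content is light, and I expect the single genuinely delicate point to be the bookkeeping in \eqref{CCW21b0}: proving that the analytic count $N$ of two-particle annihilation events equals the combinatorial invariant $n_\varepsilon=\vert S_\varepsilon^{-1}(\{0\})\vert$. This is exactly where the special structure of the $(q,2)$-Fock space is decisive, since \eqref{CCW05g1} puts the factor $1+q$ only at the two-particle level; had the commutation relation carried $q$ at all particle numbers, the exponent would instead record the total depth rather than the number of depth-zero pairs. Once this identification and the count $\vert\{-1,1\}^{2j}_{+,*}\vert=C_{j-1}$ are in place, the passage to \eqref{CCW21b} is routine given the decomposition machinery of Section~\ref{CCW-sec2}.
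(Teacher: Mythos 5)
Your proposal is correct, and its second half coincides with the paper's argument: grouping $\varepsilon\in\{-1,1\}^{2n}_+$ by the number $m$ of closed components and counting each class by $\sum_{j_1+\cdots+j_m=n}C_{j_1-1}\cdots C_{j_m-1}$ via $\big\vert\{-1,1\}^{2j}_{+,*}\big\vert=C_{j-1}$ is exactly \eqref{CCW21f}--\eqref{CCW21g} together with Remark \ref{CCW03}. Where you genuinely diverge is the per-word identity \eqref{CCW21b0}. The paper extends the word to $\bar\varepsilon=(-1,\varepsilon(1),\ldots,\varepsilon(2n),1)$, shifts all depths up by one, restricts to the indices of the pairs of depth $\le 1$, observes that this restriction reads $(-1,-1,1,-1,1,\ldots,-1,1,1)$, and then invokes Proposition \ref{CCW20h}; the step that operators attached to depth-$\ge 2$ pairs may be excised at the cost of a factor $\Vert f\Vert^2=1$ is left implicit there, resting on the structure results imported from \cite{YGLu2022e}. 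You instead evaluate the word directly right to left, checking that every intermediate vector is a scalar multiple of some $f^{\otimes k}$, that \eqref{CCW05g1} produces the factor $1+q$ exactly at two-particle annihilations, and that these occur precisely at the $p$ with $\varepsilon(p)=-1$ and $S_\varepsilon(p-1)=0$, of which there are $n_\varepsilon$ (your observation that $S_\varepsilon\le 0$ forces $\varepsilon(p)=-1$ whenever $S_\varepsilon(p-1)=0$ closes that count, and the leftmost $A(f)$ indeed always meets a one-particle state). The trade-off: your route is self-contained and makes the excision explicit, in effect reproving Proposition \ref{CCW20h} along the way, while the paper's route is shorter on the page but leans on the depth machinery and external facts; both exploit the same feature of the $(q,2)$-Fock space, namely that $q$ enters \eqref{CCW05g1} only at the two-particle level.
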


\begin{proof} Let $V:=\{0,1,\ldots,2n,2n+1\}$ and $\bar\varepsilon:=(-1,\varepsilon(1),\ldots,\varepsilon (2n),1)$, then

$\bullet$ $\bar\varepsilon\in \{-1,1\}^V_{+,*}$ and its counterpart is $\{(l_h^\varepsilon,r_h^\varepsilon)\} _{h=1}^n \uplus \{(0,2n+1)\}$;

$\bullet$ $d_{\bar\varepsilon}(0,2n+1)=0$ and $d_{\bar\varepsilon}(l_h^\varepsilon,r_h^\varepsilon)= d_{\varepsilon}(l_h^\varepsilon,r_h^\varepsilon)+1\ge1$ for any $h\in\{1,\ldots,n\}$; moreover, \eqref{CCW21a} guarantees that
\begin{align*}
\{h:d_{\bar\varepsilon}(l_h^\varepsilon,r_h^\varepsilon)=1\}=\{h:d_{\varepsilon}(l_h^\varepsilon,r_h^\varepsilon)=0\}=\{j_p+1:p=0,1,\ldots,n_\varepsilon-1\}
\end{align*}
So, by defining $V^{\bar\varepsilon}_{0,1}:=\{0,2n+1, l_{j_p+1}^\varepsilon, r_{j_p+1}^\varepsilon: \, p\in[0,1, \ldots,n_\varepsilon-1]\}$ and $\bar\varepsilon_{0,1} :=$the restriction of $\bar\varepsilon$ on $V^{\bar\varepsilon}_{0,1}$,  $\bar\varepsilon_{0,1}$ must take the form $(-1,-1,1,-1,1,\ldots,-1,1,1)$. Thus, \eqref{CCW21b0} holds thanks to Proposition \ref{CCW20h}. Moreover, we have that:
\begin{align}\label{CCW21f}
\{-1,1\}^{2n}_+&=\bigcup_{m=1}^n\big\{\varepsilon\in \{-1,1\}^{2n}_+:\, \{(l_h^\varepsilon, r_h^\varepsilon) \}_{h=1}^n\text{ has } m\text{ close components}\big\}\notag\\
&=\bigcup_{m=1}^n\big\{\varepsilon\in \{-1,1\}^{2n}_+:\,
\{(l_h^\varepsilon, r_h^\varepsilon) \}_{h=1}^n\text{ has }
m\text{ open components}\big\}
\end{align}
and as mentioned in Remark \ref{CCW03},
\begin{align}\label{CCW21g}
&\big\vert\big\{\varepsilon\in \{-1,1\}^{2n}_+:\,
\{(l_h^\varepsilon, r_h^\varepsilon) \}_{h=1}^n\text{ has }
m\text{ open components}\big\}\big\vert\notag\\
=&\sum_{j_1,\ldots,j_{m}\ge1\atop j_1+\ldots+j_{m}=n}
C_{j_1-1}\ldots C_{j_m-1}
\end{align}
Therefore, we obtain \eqref{CCW21b} as an application of the formulae \eqref{CCW21b0}, \eqref{CCW21f}, and \eqref{CCW21g}. \end{proof}

As a straightforward application of \eqref{CCW21b} and properties of closed components decomposition of non--crossing pair partition, we find that, for any $n\in{\mathbb N}^*$,
\begin{align}\label{CCW21z1}
C_n=&\sum_{\varepsilon\in\{-1,1\}^{2n}_+}
\big\langle\Phi, A(f)A^{\varepsilon(1)}(f)\ldots A^{\varepsilon(2n)}(f) A^+(f) \Phi\big\rangle\notag\\
=&\sum_{m=1}^{n}\sum_{j_1,\ldots,j_m\ge1\atop j_1+\ldots+j_{m}=n} C_{j_1-1}\ldots C_{j_m-1}= \sum_{m=1}^{n}\sum_{i_1,\ldots,i_m\ge0\atop i_1+\ldots+i_{m}=n-m} C_{i_1}\ldots C_{i_m}
\end{align}
where, $A^+(f)$ and $A(f)$ are the creation and annihilation operators defined on the {\bf free} Fock space over ${\mathcal H}$ with unit form test function $f\in {\mathcal H}$.

Indeed, \eqref{CCW21b} implies that when $q=0$, we recover the second equality in \eqref{CCW21z1}. The first equality in \eqref{CCW21z1} holds because, for $q=0$, our Fock space is the {\it free Fock space}.

The following result provides an analogue of \eqref{CCW21z1} for general case of $q\in[-1,1] \setminus\{0\}$, in particular, for $q=1$.

\begin{theorem}\label{CCW22}For any $q\in[-1,1]\setminus \{0\}$ and $0\ne f\in{\mathcal H}$, let $w_0(q):=\Vert f\Vert$ and
\begin{align*}
w_n(q):=\sum_{\varepsilon\in\{-1,1\}^{2n}_+}
\big\langle\Phi, A(f)A^{\varepsilon(1)}(f)\ldots A^{\varepsilon(2n)}(f) A^+(f) \Phi\big\rangle,\quad \forall n\in \mathbb{N}^*
\end{align*}
then
\begin{align}\label{CCW22b}
w_n(q)&=\Vert f\Vert^{2n+2}\sum_{m=1}^{n}\left( 1+q\right) ^m \sum_{i_1,\ldots,i_m\geq0\atop i_1+\ldots+i_m=n-m} C_{i_{1}}\ldots C_{i_{m}}\notag\\
&=\Vert f\Vert^{2n+2}{1+q\over q}\Big({(1+q)^{2n-1}\over q^{n-1}}- \sum_{m=1}^nC_{m-1}{(1+q)^{2(n-m)}\over q^{n-m}}\Big),\qquad \forall n\in \mathbb{N}^*
\end{align}
In particular, for $q=1$, $w_0(1)=\Vert f\Vert^2$ and
\begin{align}\label{CCW22b0}
w_n(1)=\Vert f\Vert^{2n+2}\sum_{m=1}^{n}2^m \sum_{i_1,\ldots,i_m\geq0\atop i_1+\ldots+i_m=n-m} C_{i_1}\ldots C_{i_m} =\Vert f\Vert^{2n+2}\cdot \binom{2n}{n},\quad \forall n\in \mathbb{N}^*
\end{align}
i.e., the $2n-$th moment of the Arc-sine distribution with unit variance when $\Vert f\Vert=1$.
\end{theorem}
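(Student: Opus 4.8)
The first equality in \eqref{CCW22b} is essentially a restatement of Theorem \ref{CCW21}. Each summand $\langle \Phi, A(f)A^{\varepsilon(1)}(f)\cdots A^{\varepsilon(2n)}(f)A^+(f)\Phi\rangle$ is a vacuum expectation of a product of $n+1$ creation and $n+1$ annihilation operators, hence homogeneous of degree $2n+2$ in $\|f\|$ (everything enters only through $\langle f,f\rangle=\|f\|^2$ via \eqref{CCW05g1}); so I would first factor out $\|f\|^{2n+2}$ and reduce to the unit-norm case covered by \eqref{CCW21b}, then apply the index change $i_k:=j_k-1$ to convert each $C_{j_k-1}$ into $C_{i_k}$ and the constraint $j_1+\cdots+j_m=n$ into $i_1+\cdots+i_m=n-m$. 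This yields $w_n(q)=\|f\|^{2n+2}\sum_{m=1}^n(1+q)^m a_{n,m}$ with $a_{n,m}:=\sum_{i_1+\cdots+i_m=n-m}C_{i_1}\cdots C_{i_m}$.

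For the second (closed-form) equality, write $b:=1+q$ and let $C(z):=\sum_{k\ge0}C_kz^k$ be the Catalan generating function, which satisfies $zC(z)^2=C(z)-1$. Since $a_{n,m}=[z^{n-m}]C(z)^m$, I would pass to the generating function $G(z):=\sum_{n\ge1}\big(\sum_{m=1}^n b^m a_{n,m}\big)z^n$. Interchanging the order of summation and summing the geometric series in $m$ gives the compact form
\[
G(z)=\sum_{m\ge1}\big(bzC(z)\big)^m=\frac{bzC(z)}{1-bzC(z)}.
\]
Separately, I would compute the generating function $\tilde G(z)$ of the proposed right-hand side (stripped of $\|f\|^{2n+2}$): the leading term contributes $\sum_{n\ge1}(b^2/q)^nz^n=\frac{b^2z}{q-b^2z}$, while the Catalan-weighted sum $\sum_{j=0}^{n-1}C_{n-1-j}\frac{b^{2j+1}}{q^{j+1}}$ is a convolution of $(C_k)$ with $(b^{2\ell-1}/q^\ell)_{\ell\ge1}$, whose generating function is $C(z)\cdot\frac{bz}{q-b^2z}$; hence $\tilde G(z)=\frac{bz\,(b-C(z))}{q-b^2z}$.

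It then remains to verify $G(z)=\tilde G(z)$ as formal power series, after which comparing coefficients of $z^n$ finishes the proof. Cross-multiplying and cancelling $bz$, this reduces to $C(z)\big(q-b^2z\big)=\big(b-C(z)\big)\big(1-bzC(z)\big)$; expanding both sides the difference collapses (using $b=q+1$) to $b\big(C(z)-1-zC(z)^2\big)$, which vanishes by the Catalan functional equation. I expect this algebraic identity to be the only genuine obstacle, and it is mild once the functional equation is invoked — the place where sign and index errors are most likely is the bookkeeping in assembling $\tilde G$ from its two pieces.

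Finally, for $q=1$ (so $b=2$) I would specialize $G$ directly rather than simplify the general closed form: writing $u:=zC(z)$ one has $z=u-u^2$, hence $1-4z=(1-2u)^2$ and $(1-4z)^{-1/2}=(1-2zC(z))^{-1}$; therefore
\[
G(z)\big|_{q=1}=\frac{2zC(z)}{1-2zC(z)}=(1-4z)^{-1/2}-1=\sum_{n\ge1}\binom{2n}{n}z^n,
\]
which gives $w_n(1)=\|f\|^{2n+2}\binom{2n}{n}$, the $2n$-th moment of the arcsine law when $\|f\|=1$.
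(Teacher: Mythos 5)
Your proposal is correct, and its first half coincides with the paper's: both pass to the generating function, use $a_{n,m}=[z^{n-m}]C(z)^m$, and sum the geometric series to get $\sum_{m\ge1}\big(bzC(z)\big)^m=\frac{bzC(z)}{1-bzC(z)}$ with $b=1+q$ (the paper writes this as $\frac{2}{1-q+(1+q)\sqrt{1-4x}}$ using the explicit radical form of $C$). You diverge in how the closed form is established. The paper \emph{derives} it: it rationalizes the denominator to write $W(x)=\big(1-\frac{1+q}{q}\sum_{n\ge1}C_{n-1}x^n\big)\big(1+\sum_{n\ge1}(\frac{(1+q)^2}{q})^nx^n\big)$, using the expansion $\sqrt{1-4x}=1-2\sum_{n\ge1}C_{n-1}x^n$, and reads off the Cauchy product coefficients. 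You instead \emph{verify} the stated formula: you assemble the generating function $\tilde G$ of the right-hand side from its two pieces (your convolution bookkeeping checks out: $\sum_{\ell=0}^{n-1}C_{n-1-\ell}\,b^{2\ell+1}/q^{\ell+1}$ does have generating function $C(z)\cdot\frac{bz}{q-b^2z}$) and reduce $G=\tilde G$ to $C(z)\big(q-b^2z\big)=\big(b-C(z)\big)\big(1-bzC(z)\big)$, which indeed collapses to $b\big(C(z)-1-zC(z)^2\big)=0$ by the functional equation. Your route is entirely formal-power-series and radical-free, so it sidesteps the paper's convergence-radius remark and the half-factorial computation of the Taylor coefficients of $\sqrt{1-4x}$; the price is that it presupposes the answer, whereas the paper's rationalization discovers it and, as a by-product, produces the explicit analytic form $W(x)\big\vert_{q=1}=(1-4x)^{-1/2}$ that is reused verbatim in the proof of Theorem \ref{CCW23}. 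For $q=1$ the paper simply specializes that radical expression, while your substitution $u:=zC(z)$, $z=u-u^2$, $1-4z=(1-2u)^2$ reaches $(1-4z)^{-1/2}-1=\frac{2u}{1-2u}$ — slightly more roundabout but equally valid. One cosmetic point: like the paper, you should note that the homogeneity reduction to $\Vert f\Vert=1$ gives $w_0=\Vert f\Vert^2$ (the theorem's $w_0(q):=\Vert f\Vert$ is evidently a typo, as $\langle\Phi,A(f)A^+(f)\Phi\rangle=\Vert f\Vert^2$), but this does not affect \eqref{CCW22b} for $n\ge1$.
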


As an application of this result, one can find in \cite{YGLu2022b} the {\bf vacuum distribution} of the field operator $A(f)+A^+(f)$.

\begin{proof} Without loss of generality, we assume that $\Vert f\Vert=1$. In this case, the first equality in \eqref{CCW22b} is nothing but \eqref{CCW21b}, and we only need to consider the second.

For simplicity, $w_n(q)$ will be denoted as $w_n$ for all $n\in \mathbb{N}$.

In the case of $n=1$, we obtain, by recalling that $C_0:=1$, that:
\[{1+q\over q}\Big({(1+q)^{2n-1}\over q^{n-1}}- \sum_{m=1}^nC_{m-1}{(1+q)^{2(n-m)}\over q^{n-m}}\Big) \Big\vert_{n=1}={1+q\over q}\big( (1+q)-C_{0}\big)=1+q
\]

Let $W$ be the {\it generating function} of the sequence
$\{w_n\}_{n=0}^\infty$:
\begin{align}\label{CCW22c}
W(x):=\sum_{n=0}^{\infty}w_nx^n
=1+\sum_{n=1}^{\infty}w_nx^n
\end{align}
Since
\begin{align*}
0\le& w_n=\sum_{m=1}^{n}\left( 1+q\right) ^m \sum_{i_1,\ldots,i_m\geq0\atop i_1+\ldots+i_m=n-m} C_{i_{1}}\ldots C_{i_{m}}\notag\\
\le& 2^n\sum_{m=1}^{n} \sum_{i_1,\ldots,i_m\geq0\atop i_1+\ldots+i_m=n-m} C_{i_{1}}\ldots C_{i_{m}}
\overset{\eqref{CCW21z1}}=2^nC_n
\end{align*}
and since the series $\sum_{n=0}^\infty 2^nC_nx^n$ has a positive convergence radius, we conclude that the series $W$ given in \eqref{CCW22c} also has a positive convergence radius.

Moreover, for any $m\in\mathbb{N}^*$,
\begin{align}\label{CCW22d}
&\sum_{n=m}^\infty x^{n-m}\sum_{i_1,\ldots,i_m\ge0,\atop i_1+\ldots+i_m=n-m}C_{i_{1}}\ldots C_{i_{m}}=\sum_{k=0}^\infty x^k\sum_{i_1,\ldots,i_m\ge0,\atop i_1+\ldots+i_m=k}C_{i_{1}}\ldots C_{i_{m}}\notag\\
=&\Big(\sum_{k=0}^\infty C_{k}x^{k}\Big)^m=\Big(  \frac{1-\sqrt{1-4x}}{2x}\Big) ^m
\end{align}
and consequently, for any $x$ with a sufficiently small absolute value, we have:
\begin{align}\label{CCW22e}
W(x)&=1+\sum_{n=1}^\infty w_nx^n=1+\sum_{n=1}^\infty
x^n\sum_{m=1}^n(  1+q)^m\sum_{i_1,\ldots,i_m\ge0,\atop i_{1}+\ldots+i_m=n-m}C_{i_1}\ldots C_{i_m}\notag\\
& =1+\sum_{m=1}^\infty(1+q)^mx^m\sum_{n=m}^\infty
x^{n-m}\sum_{i_1,\ldots,i_m\ge0,\atop i_1+\ldots+i_m=n-m}C_{i_{1}}\ldots C_{i_{m}}\notag\\
& \overset{\eqref{CCW22d}}=1+\sum_{m=1}^\infty(1+q)^m x^m\Big(  \frac{1-\sqrt{1-4x}}{2x}\Big)  ^m\notag\\
& =\frac{1}{1-\frac{1+q}{2}\big(1-\sqrt{1-4x}\big) }= \frac{2}{1-q+(  1+q)  \sqrt{1-4x}}\notag\\
& =\frac{2\left(  1-q-(  1+q)\sqrt{1-4x}\right)  } {(1-q)^2-(1+q)^2(1-4x)}=\frac{-1}{2q}\cdot \frac{1-q-   (1+q)\sqrt{1-4x}}{1-\frac{(1+q)^2x}{q}}\notag\\
& =\frac{(1+q)\sqrt{1-4x}+q-1}{2q} \sum_{n=0}^\infty \left(\frac{(  1+q)^2x}{q}\right)^n
\end{align}
Using the notion of {\bf half--factorial} $(2n-1) !!:=(2n-1)\cdot(2n-3)\cdot\ldots\cdot3\cdot1$, it is straightforward to see that:
\begin{align*}
{d\over dx}\sqrt{1-4x}&=-2(1-4x)^{-{1\over2}}\\ {d^{n+1}\over dx^{n+1}}\sqrt{1-4x}&=-2^{n+1}\cdot (2n-1)!!\cdot (1-4x)^{-{{2n+1}\over2}},\qquad \forall n\in{\mathbb N}^*
\end{align*}
and consequently,
\begin{align}\label{CCW22n}
&\sqrt{1-4x}=1+{d\sqrt{1-4x}\over dx}\Big\vert_{x=0}x
+\sum_{n=1}^\infty{d^{n+1}\sqrt{1-4x}\over dx^{n+1}}\Big\vert_{x=0}{x^{n+1}\over (n+1)!}\notag\\
=&1-2x\sum_{n=0}^\infty C_nx^n
=1-2\sum_{n=1}^\infty C_{n-1}x^n
\end{align}
where, the penultimate equality holds because
\[{2^n\cdot (2n-1)!!\over (n+1)!}={n!\cdot 2^n\cdot  (2n-1)!!\over n!(n+1)!}={(2n)!\over n!(n+1)!}=C_n
\]
\eqref{CCW22n} trivially gives
\begin{align*}
{( 1+q)  \sqrt{1-4x}+q-1\over 2q}=1-{1+q\over q}\sum_{n=1}^\infty C_{n-1}x^n
\end{align*}
and by applying this to \eqref{CCW22e}, we obtain
\begin{align}\label{CCW22g}
W(x)=1+\sum_{n=1}^\infty w_nx^n=
\Big(1-{1+q\over q}\sum_{n=1}^\infty C_{n-1}x^n\Big)
\Big(1+\sum_{n=1}^\infty \left(\frac{(  1+q)^2}{q}\right)^nx^n\Big)
\end{align}
So, \eqref{CCW22b} is proven because the product of two series on the right hand side of \eqref{CCW22g} equals to:
\begin{align*}
1+&\Big({(1+q)^2\over q}-{(1+q)\over q}\Big)x\\
+&{(1+q)\over q}\sum_{n=2}^\infty\Big({(1+q)^{2n-1} \over q^{n-1}}-C_{n-1}-\sum_{m=1}^{n-1}C_{m-1}{(1+q)^ {2(n-m)}\over q^{(n-m)}}\Big)x^n
\end{align*}
i.e.
\[
1+(1+q)x+{(1+q)\over q}\sum_{n=2}^\infty
\Big({(1+q)^{2n -1}\over q^{n-1}}-\sum_{m=1}^{n}C_{m-1} {(1+q)^{2(n-m)} \over q^{(n-m)}}\Big)x^n
\]

Now let's present the proof of the part ``in particular''. Thanks to the additional assumption $q=1$, \eqref{CCW22e} provides us with
\begin{align}\label{CCW22h}
1+\sum_{n=1}^\infty w_n(1)x^n=\frac{1}{1-\frac{1+q}{2} \big(1-\sqrt{1-4x}\big) }\Big\vert_{q=1}={1\over \sqrt{1-4x}}
\end{align}
This is nothing else than the moment--generating function of the Arc--sine distribution with unit variance. Therefore,
\begin{align*}
&w_n(1)={1\over n!}{d^{n}\sqrt{1-4x}\over dx^{n}}\Big\vert_{x=0}={2^n\cdot (2n-1)!!\over n!}=
\binom{2n}{n},\qquad\forall n\in\mathbb{N}^*
\end{align*}      \end{proof}

\section{An application}\label{CCW-sec4}

As previously discussed \cite{YGLu2022e}, for any $n\in{ \mathbb N}^*$ and $\varepsilon\in\{-1,1\}^{2n}_+$, there exists a unique ${\mathcal P}_n(\varepsilon)\subset PP(2n)$ such that on the $(q,2)-$Fock space,
\begin{align}\label{CCW24}
A^{\varepsilon(1)}(f_1)\ldots A^{\varepsilon(2n)}(f_{2n})\Phi=\sum_{ \theta:=\{(v_{l^\varepsilon_h}, v_{r^\varepsilon_h})\}_{h=1}^n \in{\mathcal P}_n (\varepsilon)}q^{c(\theta)} \prod_{h=1}^n\langle f_{l_h},f_{r_h}\rangle \Phi
\end{align}
in particular (in fact, equivalently)
\begin{align}\label{CCW24a}
\big\langle \Phi, A^{\varepsilon(1)}(f_1)\ldots A^{\varepsilon(2n)}(f_{2n})\big\rangle=\sum_{ \theta:=\{(v_{l^\varepsilon_h}, v_{r^\varepsilon_h})\}_{h=1}^n \in{\mathcal P}_n (\varepsilon)}q^{c(\theta)} \prod_{h=1}^n\langle f_{l_h},f_{r_h}\rangle
\end{align}
where, by adapting the convention $\sum_{h=1} ^{0}:=0$, for any $\theta:=\{(v_{l_h}, v_{r_h})\} _{h=1}^{n}\in PP(V)$,
\begin{align*}
c(\theta):=c(\{(v_{l_h},v_{r_h})\}_{h=1}^{n}):=\sum_{h=1} ^{n-1}\vert\{j: l_h<l_j<r_h<r_j \}\vert
\end{align*}
is called the {\bf restricted crossing number of $\theta$} (see \cite{tM-mS2008} and reference within).
Moreover, \cite{YGLu2022e} provides the explicit form of the set ${\mathcal P}_n (\varepsilon)$: for any $\varepsilon\in\{-1,1\}^{2n}_+$,
\begin{align}\label{CCW15a1}
\mathcal{P}_n(\varepsilon):=\{(v_{l^\varepsilon_h}, v_{r^\varepsilon_h})\}_{h\le n:\, d_\varepsilon (l^\varepsilon_h, r^\varepsilon_h) \ge2}\uplus
PP(V_{0,1}^\varepsilon, \varepsilon_{0,1})
\end{align}
with $V^\varepsilon_{0,1}:=\big\{l^\varepsilon_h, r^\varepsilon_h: d_\varepsilon(l^\varepsilon_h, r^\varepsilon_h)\in\{0,1\}\big\}$ and $\varepsilon_{0,1}:=$the restriction of $\varepsilon$ to
$V^\varepsilon_{0,1}$. That is, an arbitrary element of $\mathcal{P}_n(\varepsilon)$ consists of (independently of the specific value of $q\in[-1,1]\setminus \{0\}$):

$\bullet$ all pairs $(v_{l^\varepsilon_h}, v_{r^ \varepsilon_h})$ with the depth greater or equal to 2;

$\bullet$ an arbitrary element of $PP(V_{0,1}^\varepsilon, \varepsilon_{0,1})$.\\
Additionally, $\mathcal{P}_n(\varepsilon)\cap \mathcal{P}_n(\varepsilon')=\emptyset$ whenever $\varepsilon$ and $\varepsilon'$ are two distinct elements of $\{-1,1\}^{2n}_+$.

A natural question arises: similar to the well--known results: $\big\vert NCPP(2n)\big\vert= C_n$ and $\big\vert PP(2n)\big\vert= (2n-1)!!$, what is the cardinality of the set $\mathcal{P}_n:=\cup_{\varepsilon \in\{-1,1\}^{2n}_+} \mathcal{P}_n(\varepsilon)$?

As an application of Theorems \ref{CCW21} and \ref{CCW22}, we can answer the above question as follows:

\begin{theorem}\label{CCW23}Let $u_0:=1$ and let, for any $n\in\mathbb{N}^*$, $u_n:=\big \vert {\mathcal P}_n\big\vert$ be the cardinality of the set $\mathcal{P}_n$.
Then $\{u_n\}_{n\in \mathbb{N}}$ verifies the system
\begin{align}\label{CCW23a}
u_{n+1}=\sum_{m=0}^{n} \binom{2m}{m}u_m,\qquad \forall n\in \mathbb{N}
\end{align}
with the initial conditions $u_0=1$. Moreover, $u_n$'s take the form:
\begin{align*}
u_0=u_1=1,\qquad u_n=f_n+g_n+ \sum_{m=1}^{n-1} f_{n-m}g_{m},\quad \forall n\ge 2
\end{align*}
where, for any $n\in\mathbb{N}^*$,
\begin{align}\label{CCW23b1}
f_n&:=\sum_{1\le m\le n+1; m\text{ is odd} } \binom{n+1}{m}2^{n+1-m}\cdot5^{m-1\over2} ,\qquad
g_n:=\begin{cases}
-3,\text{ if }n=1\\ -2C_{n-2},\text{ if }n\ge2
\end{cases}
\end{align}
\end{theorem}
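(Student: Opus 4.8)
The plan is to reduce the cardinality $u_n=\bigl|\mathcal{P}_n\bigr|$ to a weighted count over non--crossing pair partitions, then compute the resulting generating function, and finally extract its coefficients.

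\textbf{Step 1 (reduction to a weighted Catalan sum).} Since the sets $\mathcal{P}_n(\varepsilon)$ are pairwise disjoint, $u_n=\sum_{\varepsilon\in\{-1,1\}^{2n}_+}\bigl|\mathcal{P}_n(\varepsilon)\bigr|$. By \eqref{CCW15a1}, each $\mathcal{P}_n(\varepsilon)$ is the gluing of one fixed family (the pairs of depth $\ge 2$) with $PP(V_{0,1}^\varepsilon,\varepsilon_{0,1})$, so $\bigl|\mathcal{P}_n(\varepsilon)\bigr|=\bigl|PP(V_{0,1}^\varepsilon,\varepsilon_{0,1})\bigr|$. I would evaluate the latter with the product formula $|PP(V,\eta)|=\prod_h(2h-l^\eta_h)$ recalled in Section \ref{CCW-sec2-1}. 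Running this product along the closed--components decomposition of the counterpart $\theta$ of $\varepsilon$ (Definition \ref{CCW02}), the depth--$0$ and depth--$1$ endpoints of a single closed component form the pattern $-1,(-1,+1),\dots,(-1,+1),+1$, each depth--$1$ pair contributing a factor $2$ and the outer pair a factor $1$; because each closed component is an interval that is fully matched inside itself (Remark \ref{CCW03}), distinct components do not interact. Hence $\bigl|PP(V_{0,1}^\varepsilon,\varepsilon_{0,1})\bigr|=2^{d_1(\theta)}$, where $d_1(\theta)$ is the number of depth--$1$ pairs of $\theta$, giving
\[u_n=\sum_{\theta\in NCPP(2n)}2^{d_1(\theta)}.\]

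\textbf{Step 2 (generating function and recurrence).} Let $U(x):=\sum_{n\ge0}u_nx^n$. Reading a non--crossing pair partition as a sequence of its closed components makes the weight $2^{d_1}$ multiplicative, so $U(x)=(1-K(x))^{-1}$ with $K$ the series of a single closed component. A closed component is an outer pair together with an inner open component, and its depth--$1$ pairs are exactly the closed components (depth--$0$ pairs) of that inner partition; weighting a non--crossing partition by $2^{(\#\text{depth-}0)}$ gives $G(x)=(1-2xC(x))^{-1}$, where $C(x)=\frac{1-\sqrt{1-4x}}{2x}$ is the Catalan series of \eqref{CCW22d}. Since $2xC(x)=1-\sqrt{1-4x}$, this collapses to $G(x)=(1-4x)^{-1/2}=\sum_m\binom{2m}{m}x^m$ --- precisely the moments $w_m(1)$ of \eqref{CCW22b0} --- and $K(x)=xG(x)$. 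Comparing coefficients in the functional equation $U=1+xGU$ then gives $u_{n+1}=\sum_{m=0}^{n}\binom{2m}{m}u_{n-m}$, which is the recurrence \eqref{CCW23a}.

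\textbf{Step 3 (closed form).} Here $U(x)=\left(1-x(1-4x)^{-1/2}\right)^{-1}$; rationalizing gives $U(x)=\frac{1-4x+x\sqrt{1-4x}}{1-4x-x^2}$, which I would factor as $\frac{1}{1-4x-x^2}\cdot\left(1-4x+x\sqrt{1-4x}\right)$. The denominator $1-4x-x^2$ has reciprocal roots $2\pm\sqrt5$, so a Binet--type expansion shows $[x^n](1-4x-x^2)^{-1}=f_n$, only the odd powers of $\sqrt5$ surviving. For the second factor, $\sqrt{1-4x}=1-2\sum_{k\ge1}C_{k-1}x^k$ by \eqref{CCW22n}, whence $1-4x+x\sqrt{1-4x}=1-3x-2\sum_{k\ge2}C_{k-2}x^k$, so its coefficients are $1,g_1,g_2,\dots$. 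Taking the Cauchy product and using $f_0=1$ yields $u_n=f_n+g_n+\sum_{m=1}^{n-1}f_{n-m}g_m$ for $n\ge2$, together with $u_0=u_1=1$.

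\textbf{Main obstacle.} The delicate point is Step 1: showing that the product formula collapses to the clean power $2^{d_1(\theta)}$, which forces one to verify that the depth--$0$/depth--$1$ endpoints really arrange into the block pattern above and that the matching of each closed component stays confined to its own interval, so that no cross--component factors arise. Once $u_n=\sum_\theta 2^{d_1(\theta)}$ is in hand, Steps 2 and 3 are essentially bookkeeping; there the only subtlety is correctly identifying the rational part's coefficients with the $\sqrt5$--binomial sum $f_n$.
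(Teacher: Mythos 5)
Your proposal is correct, but it reaches the key recurrence \eqref{CCW23a} by a genuinely different route than the paper. The paper stays inside the $(1,2)$-Fock space: setting $q=1$ in \eqref{CCW24} it identifies $u_n$ with the vacuum moment $\big\langle\Phi,(A(f)+A^+(f))^{2n}\Phi\big\rangle$, then splits $\{-1,1\}^{2n}_+$ by the first return of $S_\varepsilon$ to zero into the classes $\{-1,1\}^{2n}_{+,m}$, factorizes the vacuum expectation across that first return using property 5) / \eqref{CCW28b} of Section \ref{CCW-sec2-2}, and recognizes the first factor as $w_{m-1}(1)=\binom{2m-2}{m-1}$ by citing Theorem \ref{CCW22}; the recurrence then drops out of \eqref{CCW23d3}. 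You instead eliminate the operators entirely: from \eqref{CCW15a1} and the product formula $\prod_h(2h-l_h)$ you evaluate each $\vert\mathcal{P}_n(\varepsilon)\vert$ in closed form as $2^{d_1(\theta)}$ --- a pointwise count the paper never writes down, and a nice refinement in its own right --- and then obtain both the recurrence and the functional equation $U=1+xU/\sqrt{1-4x}$ from a two-level closed-component decomposition of weighted non-crossing pair partitions. Your split-off of the first closed component is the exact combinatorial shadow of the paper's first-return factorization, and your identity $G(x)=(1-2xC(x))^{-1}=(1-4x)^{-1/2}$ re-derives $w_m(1)=\binom{2m}{m}$, which you could equally have cited from \eqref{CCW22b0}. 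What your route buys is a self-contained combinatorial argument (given \eqref{CCW15a1}) together with the explicit per-$\varepsilon$ cardinality $2^{d_1(\theta)}$; what the paper's route buys is brevity, since the cross-component multiplicativity you must verify by hand (the factorization of $\prod_h(2h-l_h)$ along component boundaries, which you rightly flag as the main obstacle, and which does hold because each component is internally balanced, forcing every pairing in $PP(V^\varepsilon_{0,1},\varepsilon_{0,1})$ to respect component boundaries and making the boundary factors equal to $1$) is absorbed into the operator calculus. Your Step 3 --- rationalizing to $U(x)=\frac{1-4x+x\sqrt{1-4x}}{1-4x-x^2}$, the Binet-type expansion giving $f_n$, and the series \eqref{CCW22n} giving the $g_n$ --- coincides with the paper's own computation from \eqref{CCw24d} onward.
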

\begin{proof} As mentioned previously, the construction of the sets $\mathcal{P}_n(\varepsilon)$'s is independent of the specific value of $q\in[-1,1] \setminus \{0\}$, so we can set $q=1$. This allows us to focus on the $(1,2)-$Fock space over a given Hilbert space $\mathcal{H}$.

In this case, for any unit norm vector $f\in\mathcal{H}$, \eqref{CCW24} gives us
\begin{align}\label{CCW24c}
A^{\varepsilon(1)}(f)\ldots A^{\varepsilon(2n)}(f)\Phi =\sum_{ \theta:=\{(v_{l^\varepsilon_h}, v_{r^\varepsilon_h})\}_{h=1}^n \in{\mathcal P}_n (\varepsilon)}\Phi= \big\vert{\mathcal P}_n (\varepsilon) \big\vert\Phi
\end{align}
In particular, with the help of the pairwise disjointness of ${\mathcal P}_n (\varepsilon)$'s, we have:
\begin{align}\label{CCW23c}
u_n=\big\vert{\mathcal P}_n\big\vert=\sum_{\varepsilon \in\{-1,1\}^{2n}_+}\big\vert{\mathcal P}_n (\varepsilon) \big\vert &=\sum_{\varepsilon\in\{-1,1\}^{2n}_+} \big\langle\Phi, A^{\varepsilon(1)} (f)\ldots A^{\varepsilon(2n)}(f)\Phi \big\rangle\notag\\
&= \big\langle\Phi, \big(A(f)+A^+(f)\big)^{2n}\Phi \big\rangle
\end{align}

Let's introduce a generalization of $\{-1,1\}^{2n}_{+,*}$ as follows: for any $n\in\mathbb{N}^*$ and $m\in\{1,\ldots,n\}$,
\begin{align*}
\{-1,1\}^{2n}_{+,m}:=\{\varepsilon\in \{-1,1\}^{2n}_+: \min S_\varepsilon^{-1}(\{0\})=2m\}
\end{align*}
Clearly, $\{-1,1\}^{2n}_{+,*}$ is just $\{-1,1\}^{2n}_{+,n}$; for any $m\in\{1,\ldots,n\}$
and $\varepsilon\in \{-1,1\}^{2n}_{+,m}$, it must be true that $S_\varepsilon(2m)=0$, and $S_\varepsilon(p) \ne0$ for any $p<m$. As a result,
\begin{align}\label{CCW23d}
\{-1,1\}^{2n}_{+}=\bigcup_{m=1}^n\{-1,1\}^{2n}_{+,m} \ \text{ and } \  \{-1,1\}^{2n}_{+,m}\cap\{-1,1\}^{2n} _{+,m'} =\emptyset \text { if }m=m'
\end{align}

For any $n\in\mathbb{N}^*$ and $m\in\{1,\ldots,n\}$, for any $\varepsilon\in \{-1,1\}^{2n}_{+,m}$, we introduce the following:
\begin{align*}
&\varepsilon_1:=\text{the restriction of  }\varepsilon \text{ on the set }\{1,\ldots,2m\}\notag\\
&\varepsilon_2(h):=\varepsilon(2m+h),\qquad\forall h\in\{1,\ldots,2n-2m\}
\end{align*}
It follows that $\varepsilon_1\in \{-1,1\}^{2m}_{+,*}$ and $\varepsilon_2\in \{-1,1\}^{2(n-m)}_{+}$; moreover,
as $\varepsilon$ running over $\{-1,1\}^{2n}_{+}$, $\varepsilon_1$ (and $\varepsilon_2$) runs over $\{-1,1\}^{2m}_{+,*}$ (and $\{-1,1\}^{2(n-m)}_+$) respectively.

For any $\{f_1,\ldots,f_{2n}\} \subset\mathcal{H}$, we can utilize the fact that $\sum_{h=2m+1}^{2n} \varepsilon(h)=0$ to obtain that:
\[A^{\varepsilon(2m+1)}(f_{2m+1})\ldots A^{\varepsilon(2n)}(f_{2n})\Phi=\langle\Phi, A^{\varepsilon(2m+1)}(f_{2m+1})\ldots A^{\varepsilon(2n)}(f_{2n})\Phi\rangle\Phi    \]
This implies that
\begin{align}\label{CCW23d3}
&\sum_{\varepsilon\in\{-1,1\}^{2n}_+}\big\langle\Phi, A^{\varepsilon(1)} (f_{1})\ldots A^{\varepsilon(2n)}(f_{2n})\Phi \big\rangle\notag\\
\overset{\eqref{CCW23d} }=&\sum_{m=1}^n \sum_{\varepsilon\in\{-1,1\}^{2n}_{+,m}} \big\langle\Phi, A^{\varepsilon(1)} (f_{1})\ldots A^{\varepsilon(2n)}(f_{2n})\Phi \big\rangle\notag\\
=&\sum_{m=1}^n\sum_{\varepsilon_1\in\{-1,1\}^{2m}_{+,*}} \big\langle\Phi, A^{\varepsilon_1(1)} (f_{1})\ldots A^{\varepsilon_1(2m)}(f_{2m})\Phi \big\rangle\notag\\
&\hspace{0.5cm}\sum_{\varepsilon_2\in\{-1,1\}^{2(n-m)}_+} \big\langle\Phi, A^{\varepsilon_2(1)}(f_{2m+1}) \ldots A^{\varepsilon_2(2(n-m))}(f_{2n})\Phi \big\rangle
\end{align}
By setting all $f_k$ to be equal to $f\in{\mathcal H}$ with unit norm for all $k\in\{1,\ldots,2n\}$ in the right hand side of \eqref{CCW23d3}, we observe that,   for any $m\in\{1,\ldots,n\}$,

$\bullet$ the sum $\sum_{\varepsilon_1\in\{-1,1\} ^{2m}_{+,*}} \big\langle\Phi, A^{\varepsilon_1(1)} (f_{1})\ldots A^{\varepsilon_1(2m)}(f_{2m})\Phi \big\rangle$ is nothing else than $w_{m-1}$ defined in \eqref{CCW22b0} and which is equal to  $\binom{2m-2}{m-1}$;

$\bullet$ the sum $\sum_{\varepsilon_2\in\{-1,1\} ^{2(n-m)}_+} \big\langle\Phi, A^{\varepsilon_2(1)} (f_{2m+1}) \ldots A^{\varepsilon_2(2(n-m))}(f_{2n})\Phi \big\rangle $ is nothing else than $u_{n-m}$, as introduced in \eqref{CCW23c}.\\
By applying these observations to \eqref{CCW23d3}, we arrive at the result given in \eqref{CCW23a}.

Moreover, as a consequence, the moment--generating function of the sequence $\{u_n\} _{n=0}^\infty$ can be calculated as follows:
\begin{align*}
U(x):=&\sum_{n=0}^\infty u_nx^n=1+x\sum_{n=0}^\infty u_{n+1}x^n\overset{\eqref{CCW23a}}=1+x\sum_{n=0}^\infty x^n\sum_{k=0}^{n}\binom{2k}{k}u_{n-k}\notag\\ =&1+x\sum_{k=0}^\infty \binom{2k}{k}x^k\sum_{n=k}^\infty
u_{n-k}x^{n-k}=1+xU(x)\sum_{k=0}^\infty w_k x^k
\overset{\eqref{CCW22h}}=1+{xU(x)\over \sqrt{1-4x}}
\end{align*}
By resolving this equation, we obtain
\begin{align}\label{CCw24d}
U(x)=&{\sqrt{1-4x} \big( \sqrt{1-4x}+x\big)\over 1-4x-x^2}={1-4x +x\sqrt{1-4x}\over 2\sqrt{5}}\Big({1\over \sqrt{5}-2-x} +{1\over \sqrt{5}+2+x}\Big)\notag\\
=&{1-4x +x\sqrt{1-4x}\over 2\sqrt{5}}\sum_{n=0}^\infty x^n\Big( (2+\sqrt{5})^{n+1}-(2-\sqrt{5})^{n+1}\Big)
\end{align}
Moreover, thanks to the facts that $f_1+g_1\overset{\eqref{CCW23b1}}=1$,
\[1-4x +x\sqrt{1-4x}\overset{\eqref{CCW22n}}=1-3x-2\sum_{n=2}^\infty C_{n-2}x^n
\overset{\eqref{CCW23b1}}=1+\sum_{n=1}^\infty g_nx^n
\]
and for any $n\in{\mathbb N}^*$,
\[{1\over 2\sqrt{5}}\Big( (2+\sqrt{5})^{n+1} -(2-\sqrt{5})^{n+1}\Big)=\sum_{1\le m\le n+1; m\text{ is odd} }\binom{n+1}{m} 2^{n+1-m}\cdot 5^{m-1\over 2} \overset{\eqref{CCW23b1}}=f_n\]
we complete the proof by rewriting \eqref{CCw24d} as:
\begin{align*}
U(x)=&\big(1+\sum_{n=1}^\infty g_nx^n\big)
\big(1+ \sum_{n=1}^\infty f_n x^n\big)\notag\\
=&1+(f_1+g_1)x+\sum_{n=2}^\infty (f_n+g_n+\sum_{m=1} ^{n-1}f_{n-m}g_{m})x^n
\end{align*}       \end{proof}

The following table gives a comparison of the cardinalities of $NCPP(2n)$, $\mathcal{P}_n$ and $PP(2n)$ for $1\le n\le6$:
\begin{equation*}
\begin{tabular}{|l||*{3}{c|}}\hline
n&\makebox[9.5em]{$\vert NCPP(2n)\vert=C_n$} &\makebox[9.5em]{$\vert \mathcal{P}_n\vert$} &\makebox[9.5em]{$\vert PP(2n)\vert=(2n-1)!!$}\\\hline\hline
1 &\makebox[9.5em]{1}&\makebox[9.5em]{1} &\makebox[9.5em]{1}\\\hline
2 &\makebox[9.5em]{2}&\makebox[9.5em]{3}    &\makebox[9.5em]{3}\\\hline
3 &\makebox[9.5em]{5}&\makebox[9.5em]{11}    &\makebox[9.5em]{15}\\\hline
4 &\makebox[9.5em]{14}&\makebox[9.5em]{43}    &\makebox[9.5em]{105}\\\hline
5 &\makebox[9.5em]{42}&\makebox[9.5em]{173}    &\makebox[9.5em]{945}\\\hline
6 &\makebox[9.5em]{132}&\makebox[9.5em]{707}    &\makebox[9.5em]{10395}\\\hline
\end{tabular}
\end{equation*}

\end{document}